\long\def\delete#1{}
\definecolor{Blue}{rgb}{0,0,1}
\definecolor{Red}{rgb}{1,0,0}
\definecolor{DarkGreen}{rgb}{0,0.6,0}
\definecolor{DarkYellow}{rgb}{1,1,0.2}
\definecolor{DarkPurple}{rgb}{.6,0,1}
\def\q{\hfill\rule{1ex}{1ex}}
 \newcommand{\BF}{{\mathbb {F}}}
 \newcommand{\BZ}{{\mathbb {Z}}}
\newcommand{\qb}[2]{{#1 \brack #2}_q}
\newcommand{\qp}[1]{\langle #1 \rangle _q}
\newcommand{\spann}[1]{\left\langle #1 \right\rangle}
\newcommand{\ls}{\leqslant}
\newcommand{\gs}{\geqslant}
\begin{document}
\setcounter{page}{1}
\newtheorem{thm}{Theorem}[section]
\newtheorem{fthm}[thm]{Fundamental Theorem}
\newtheorem{dfn}[thm]{Definition}
\newtheorem*{rem}{Remark}
\newtheorem{lem}[thm]{Lemma}
\newtheorem{cor}[thm]{Corollary}
\newtheorem{exa}[thm]{Example}
\newtheorem{prop}[thm]{Proposition}
\newtheorem{prob}[thm]{Problem}
\newtheorem{fact}[section]{Fact}
\newtheorem{con}[thm]{Conjecture}
\renewcommand{\thefootnote}{}
\newcommand{\remark}{\vspace{2ex}\noindent{\bf Remark.\quad}}
\newtheorem{ob}[thm]{Observation}
\newcommand{\rmnum}[1]{\romannumeral #1}
\renewcommand{\abovewithdelims}[2]{%
\genfrac{[}{]}{0pt}{}{#1}{#2}}

\newcommand\Sy{\mathrm{S}}
\newcommand\Cay{\mathrm{Cay}}
\newcommand\tw{\mathrm{tw}}
\newcommand\supp{\mathrm{supp}}


\def\qed{\hfill$\Box$\vspace{11pt}}

\title {\bf  On the $ P_3 $-hull numbers of $ q $-Kneser graphs and Grassmann graphs}

\author{Jiaqi Liao\thanks{ E-mail: \texttt{liaojq19@mails.tsinghua.edu.cn}}}
\author{Mengyu Cao\thanks{Corresponding author. E-mail: \texttt{caomengyu@mail.bnu.edu.cn}}}

\author{Mei Lu\thanks{E-mail: \texttt{lumei@tsinghua.edu.cn}}}

\affil{\small Department of Mathematical Sciences, Tsinghua University, Beijing 100084, China}

\date{}

\openup 0.5\jot
\maketitle

\begin{abstract}
Let $S$ be an $n$-dimensional vector space over the finite field $\mathbb{F}_q$, where $q$ is necessarily a prime power. Denote  $K_q(n,k)$ (resp. $J_q(n,k)$) to be the \emph{$q$-Kneser graph} (resp. \emph{Grassmann graph}) for $k\geq 1$ whose vertices are the $k$-dimensional subspaces of $S$ and two vertices $v_1$ and $v_2$ are adjacent if $\dim(v_1\cap v_2)=0$ (resp. $\dim(v_1\cap v_2)=k-1$). 
We consider the infection spreading in the $ q $-Kneser graphs and the Grassmann graphs: a vertex gets infected if it has at least two infected neighbors. In this paper, we compute the $ P_3 $-hull numbers of $K_q(n,k)$ and $J_q(n,k)$ respectively, which is the minimum size of a vertex set that eventually infects the whole graph.

\vspace{2mm}

\noindent{\bf Key words}\ \ $ P_3 $-hull number, $ q $-Kneser graph, Grassmann graph

\

\noindent{\bf MSC2010:} \   05C36, 52A37, 05A30

\end{abstract}

\section{Introduction}

Throughout this paper graphs are finite and undirected with no loops or multiple edges. Let $G=(V,E)$ be a graph with vertex set $V$ and edge set $E$. For any vertex $u\in V(G)$, the  neighborhood of $u$ in $G$, denoted by $N(u)$, is the set of vertices adjacent to $u$. For a graph $ G = (V, E) $ and a subset $ T \subseteq V $, the \emph{$ P_3 $-interval} $ I[T] $ is the union of $ T $ and the vertices with at least two neighbors in $ T $. $ T $ is said to be \emph{$ P_3 $-convex} if $ I[T] = T $. For $ p \in \BZ_{\geqslant 0} $, set $ I^0[T] := T $ and $ I^{p + 1}[T] := I[I^p[T]] $. Then $ H(T) := I^\infty[T] $ is the \emph{$ P_3 $-convex hull} of $ T $, and it is the smallest $ P_3 $-convex set containing $ T $. $ T $ is said to be a \emph{$ P_3 $-hull set} of $ G $ if $ H(T) = V $, and the \emph{$ P_3 $-hull number} $ h(G) $ of $G$ is the cardinality of a minimum $ P_3 $-hull set of $G$. From the definition, $h(G)\ge 2$.

$P_3$-convexity was introduced with the aim of modeling the spread of a disease in a grid~\cite{Bollobas}. Since then, there are many articles investigate this convexity, we refer the readers for instance to \cite{Bollobas,Bresar,Centeno}. Related to complexity aspects, Centeno et al. \cite{Centeno} proved that, given a graph $G$ and an integer $k$, to decide whether the $P_3$-hull number of $G$ is at most $k$ is an NP-complete problem.  Coelho et al. \cite{Cohelo} showed that compute the $P_3$-hull number is NP-complete even in bipartite graphs. Polynomial-time algorithms to compute the hull number of some special classes of graphs were given \cite{AMSSW, DG}.
Recently, Grippo et al. \cite{GPTVV2021} studied the $P_3$-hull number of Kneser graphs $K(n,k)$ for $ n \geqslant 2k + 2 $, whose vertices are the $k$-subsets of a $n$-set $X$, and two vertices $v_1$ and $v_2$ are adjacent if $|v_1\cap v_2|=0$. Bres\u{a} et al. \cite{Bresar} dealt with the problem of computing the $P_3$-hull number of Hamming graphs.

In this paper we  deal with the problem of computing the $ P_3 $-hull number of $ q $-Kneser graphs and Grassmann graphs, respectively. $ q $-Kneser graphs are the $q$-analogues of Kneser graphs~\cite{Kneser,Lovasz}. For a fixed prime power $ q $, let $ n, k \in \BZ $ with $ n \geqslant 2k \geqslant 2 $, and $ S $ an $ n $-dimensional vector space over the finite field $ \BF_q $. Denote by $ \qb{S}{k} $ the family of all $ k $-dimensional subspaces of $ S $. In the sequel, for two vector spaces $ w_i$ and $w_j$, we write $ w_{ij} $ for $ w_i \cap w_j $. The \emph{$ q $-Kneser graph} $ K_q(n, k) $ is the graph whose vertex set is $ \qb{S}{k} $ and two vertices $ w_1 $ and $ w_2 $ are adjacent if $ w_{12} = \textbf{0} $, the zero space. The {\em Grassmann graph} $J_q(n,k)$ is the graph whose vertex set is $ \qb{S}{k} $ and two vertices $ w_1 $ and $ w_2 $ are adjacent if $\dim w_{12} = k-1 $. Over the years, several aspects of $q$-Kneser graphs such as treewidth, chromatic number, energy, eigenvalues and some other properties had been widely studied as one can find in, for example \cite{Blokhuis,CLLL2021,Huang,Lv,Lv2}.  The Grassmann graph plays an important role in geometry \cite{Pan,Wan} and coding theory \cite{KK,PWL}.

Now we fix some notations. The \emph{$ q $-limit} and the \emph{Gaussian binomial coefficient} of $ n, k \in \BZ_{> 0} $ is defined respectively by

\[\qp{n} := \prod_{i = 1}^{n}(q^i - 1), \qquad \qb{n}{k} := \frac{\qp{n}}{\qp{k}\qp{n - k}}.\]

\noindent In addition, we set $\qp{0}=1$, $\qb{n}{0}=\qb{0}{0}=1$ and $\qb{n}{k}=0$ if $k>n$. Note that the Gaussian binomial coefficient $\qb{n}{k}$ is the cardinality of $ \qb{S}{k} $.

 The main results in this paper are as follows.

\begin{thm}\label{thm_main}
	
When $ n \geqslant 2k\ge 2 $, we have $ h(K_q(n, k)) = 2 $.	
	
\end{thm}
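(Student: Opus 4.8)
The plan is to exhibit a specific pair of vertices $\{w_1, w_2\}$ in $K_q(n,k)$ whose $P_3$-convex hull is all of $\qb{S}{k}$. Since $h(G) \geqslant 2$ always, producing such a pair immediately gives $h(K_q(n,k)) = 2$. The natural starting choice is a pair of adjacent vertices, i.e. two $k$-subspaces $w_1, w_2$ with $w_1 \cap w_2 = \mathbf{0}$; when $n \geqslant 2k$ such a pair exists, and $w_1 \oplus w_2$ is a $2k$-dimensional (or, when $n = 2k$, the whole) subspace. The idea is then to show that the infection started from $\{w_1, w_2\}$ first fills up all $k$-subspaces lying inside $w_1 \oplus w_2$, and then, if $n > 2k$, propagates outward to exhaust $\qb{S}{k}$.

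The key steps, in order, are as follows. \textbf{Step 1: seed growth inside a fixed $2k$-space.} Work first in the case $n = 2k$. Fix complementary $k$-subspaces $w_1, w_2$. A third $k$-subspace $u$ gets infected once it has two infected neighbors; since being adjacent to $w_i$ means $u \cap w_i = \mathbf{0}$, i.e. $u$ is a complement of $w_i$, one wants to find many $u$ that are simultaneously complements of both $w_1$ and $w_2$ — these are exactly the graphs of isomorphisms $w_1 \to w_2$ (choosing coordinates $S = w_1 \oplus w_2$, such $u$ is $\{(x, Ax) : x \in w_1\}$ for $A \in \GL_k(q)$). Adjoin all of these. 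Then iterate: once enough "graph" subspaces and $w_1, w_2$ are present, I would argue that an arbitrary $k$-subspace $u$ (with $\dim(u\cap w_1)$ or $\dim(u\cap w_2)$ possibly positive) acquires two infected neighbors among the already-infected set, using a counting/dimension argument to guarantee the existence of two infected complements of $u$. The cleanest route is probably to show that the set of infected vertices, once it contains $w_1$, $w_2$ and all their common complements, is not $P_3$-convex unless it is everything — i.e. any not-yet-infected $u$ has two infected neighbors — by a direct construction of two complements of $u$ that are themselves common complements of $w_1, w_2$ (possible because the relevant Schubert-type cells are large when $q \geqslant 2$ and $k \geqslant 1$).

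\textbf{Step 2: passing from $2k$ to $n$.} For $n > 2k$, having infected every $k$-subspace inside one fixed $2k$-subspace $W_0 = w_1 \oplus w_2$, I would spread to other $2k$-subspaces: pick a $2k$-subspace $W_1$ with $\dim(W_0 \cap W_1)$ as convenient (e.g. sharing a $k$-space), locate inside $W_0$ two already-infected $k$-subspaces that are both complements within $W_1$ of some target $k$-subspace $u \subseteq W_1$, infecting $u$; repeat to fill $W_1$, then note that the $2k$-subspaces are "connected" in a strong enough sense that every $k$-subspace of $S$ lies in some $2k$-subspace reachable this way. A transitivity argument using $\GL_n(q)$ acting on pairs (fixed $W_0$, varying $W_1$) keeps the bookkeeping short.

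\textbf{Main obstacle.} The delicate point is Step 1 — proving that the partially infected set inside $W_0$ genuinely forces the remaining vertices, i.e. that every $k$-subspace $u \subseteq W_0$ not yet infected really has \emph{two} infected neighbors, not just one. This requires a careful existence argument for pairs of common complements avoiding $u$ (or, after the first wave, pairs of already-infected subspaces meeting $u$ trivially), and the small cases $q = 2$, $k = 1$, or $n = 2k$ must be checked by hand since the generic counting bounds are tightest there. Once that combinatorial core is in place, Step 2 is essentially homogeneity of the Grassmannian under $\GL_n(q)$ plus induction on $n$.
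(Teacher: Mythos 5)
There is a genuine gap: your argument is a plan rather than a proof, and the step you yourself flag as the ``main obstacle'' is precisely the mathematical content of the theorem. In Step 1 you adjoin $w_1$, $w_2$ and all their common complements (graphs of invertible maps $w_1\to w_2$) and then assert that every remaining $k$-subspace $u\subseteq w_1\oplus w_2$ has \emph{two} infected neighbors, ``using a counting/dimension argument'' or ``a direct construction'' that is never carried out. Nothing in the proposal verifies this, and it is not a routine check: one must produce, for every $u$ with $\dim(u\cap w_1)=i\geqslant 1$, two common complements of $w_1,w_2$ that also meet $u$ trivially, uniformly in $q$, $k$ and $i$ (the paper needs a full lemma with an explicit count, its Lemma 2.3 giving the quantity $d_{ij}$ and Lemma 2.2 showing $d_{i0}\geqslant 2$, to handle exactly this kind of statement, and a separate constructive lemma for the tight case $n=2k$). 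Step 2 has the same problem: the propagation from one $2k$-space $W_0$ to another $W_1$ again requires exhibiting two already-infected $k$-subspaces of $W_0$ that avoid a prescribed $u\subseteq W_1$, and the appeal to $\GL_n(q)$-homogeneity does not supply such pairs -- transitivity tells you all configurations of a given type look alike, not that the needed neighbors exist. As written, the proposal would be accepted only as a research outline, not as a proof.

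For comparison, the paper avoids your bootstrapping-inside-a-$2k$-space scheme altogether by seeding with a \emph{non-adjacent} pair: for $n\geqslant 2k+1$ it takes $w_1,w_2$ meeting in a $(k-1)$-space, so that every $k$-subspace disjoint from the $(k+1)$-space $w_1+w_2$ is adjacent to both and gets infected in one step, and a single counting lemma ($d_{i0}\geqslant 2$) then infects each remaining intersection class; for $n=2k$ it again takes a non-adjacent pair and uses an explicit construction of a common neighbor of a suitable triple plus a closed-form count $q^{k^2-\binom{a+1}{2}}\qp{a}\geqslant 2$ of common neighbors. If you want to salvage your adjacent-seed route, you would have to prove analogues of those counting statements for the set of common complements; that is doable in principle but is exactly the work currently missing.
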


\begin{thm}\label{thm_main2}
	
For $ n \gs 2k\ge 2 $, we have $ h(J_q(n, k)) = 2 $.	
	
\end{thm}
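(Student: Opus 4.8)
Since $h(G)\geqslant 2$ always holds, it suffices to exhibit two vertices $w_1,w_2$ of $J_q(n,k)$ whose $P_3$-convex hull is everything. The plan is to pick $w_1,w_2$ with $\dim w_{12}=k-1$ (so that they are adjacent, hence $\{w_1,w_2\}$ already "activates" along a line of the Grassmannian), and then bootstrap outward one dimension of intersection at a time. The natural choice is to fix a flag-like configuration: let $U=w_{12}$ be a fixed $(k-1)$-space, and let $\ell_1,\ell_2$ be two distinct lines (1-spaces) with $w_i=U+\ell_i$. I would first prove the key local lemma: \emph{if $X$ is a $P_3$-convex set containing at least two vertices through a common $(k-1)$-space $U$, then $X$ contains every $k$-space containing $U$.} Indeed, the $k$-spaces containing a fixed $(k-1)$-space $U$ correspond to the points of the projective line $\BP(S/U)\cong \mathrm{PG}(1,q)$, any two of them are adjacent in $J_q(n,k)$, and a third such $k$-space $w_3\supseteq U$ has both $w_1$ and $w_2$ as neighbors (each intersection is $U$, of dimension $k-1$); so $I[\{w_1,w_2\}]$ already contains all $q+1$ of them. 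This handles the "pencil through $U$" in a single step.

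Next I would run the bootstrap on the dimension of intersection with a reference subspace. Having activated the whole pencil $\{w:U\subseteq w,\dim w=k\}$, I want to show we can activate every $k$-space $w'$ with $\dim(w'\cap U)=k-2$, then $k-3$, and so on down to $k$-spaces meeting $U$ trivially, and simultaneously grow past $U$ itself. The mechanism each time is the same: given a target $k$-space $w'$, I must produce two already-infected $k$-spaces $y_1,y_2$ with $\dim(y_i\cap w')=k-1$, i.e. two neighbors of $w'$ in the Grassmann graph that lie in the already-infected set. Concretely, if $w'$ shares a $(k-1)$-space $V$ with some already-infected vertex, then by the local lemma the whole pencil through $V$ gets infected, and within that pencil one finds a second neighbor of $w'$ unless $w'$ itself is the only... — but $w'\in$ pencil through $V$ already. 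So the real content is: show that any $k$-space $w'$ is adjacent (shares a hyperplane) to \emph{some} vertex that has already been infected at an earlier stage, and the counting/dimension bookkeeping that guarantees at every stage there is a $(k-1)$-space we can pivot on. I would organize this as: Claim — if $A$ is $P_3$-convex and contains a full pencil through some $(k-1)$-space $U_0$, and $w'$ is any $k$-space, then $w'\in A$. Proof of claim: choose a $(k-1)$-subspace $V\subseteq w'$ chosen to meet $U_0$ in dimension exactly $k-2$ (possible since $\dim(w'\cap U_0)\geqslant k-2$ can be arranged, or handled by a short induction on $\dim(w'\cap U_0)$); then $V+\text{(a suitable point of }U_0)$ is a $k$-space $z$ containing $V$ with $z\cap U_0$ of dimension $k-1$, so $z$ lies in the pencil through $z\cap U_0\subseteq U_0$... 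I'd iterate to reach $w'$.

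The cleanest way to package all of this, which I would actually adopt: prove by induction on $d$ (from $d=k-1$ down to $d=0$) that the $P_3$-hull of $\{w_1,w_2\}$ contains every $k$-space $w'$ with $\dim(w'\cap U)\geqslant d$ together with every $k$-space inside $U+\ell_1+\ell_2$, etc.; but it is simpler to just prove "the hull contains a full pencil through every $(k-1)$-space" — once you have one full pencil, you can walk from any $(k-1)$-space to any adjacent one (two $(k-1)$-spaces of $S$ that span a $k$-space share a $(k-2)$-space, and the Grassmann graph on $(k-1)$-spaces is connected), picking up new full pencils as you go, and since every $k$-space lies in some pencil, you get all of $\qb{S}{k}$.

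\begin{proof}[Proof of Theorem~\ref{thm_main2}]
Since $h(G) \gs 2$ for every graph $G$, it suffices to find a set $T = \{w_1, w_2\}$ of size $2$ with $H(T) = \qb{S}{k}$.

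Fix a $(k-1)$-dimensional subspace $U \subseteq S$ and two distinct $1$-dimensional subspaces $\ell_1, \ell_2$ of $S$ with $\ell_i \cap U = \mathbf{0}$. Put $w_i = U + \ell_i$ for $i = 1, 2$; then $\dim w_i = k$ and $w_{12} = U$, so $w_1$ and $w_2$ are adjacent in $J_q(n, k)$. Set $T = \{w_1, w_2\}$.

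\textbf{Step 1: the pencil through $U$.} For a $(k-1)$-space $V \subseteq S$, write
\[\mathrm{Pen}(V) := \left\{ w \in \tqb{S}{k} : V \subseteq w \right\}\]
for the \emph{pencil} through $V$. If $w, w' \in \mathrm{Pen}(V)$ are distinct, then $V \subseteq w \cap w' \subsetneq w$, so $\dim(w \cap w') = k - 1$ and $w, w'$ are adjacent. In particular every $w \in \mathrm{Pen}(U)$ with $w \neq w_1, w_2$ satisfies $\dim(w \cap w_1) = \dim(w \cap w_2) = k - 1$, so $w \in I[T]$. Hence
\[\mathrm{Pen}(U) \subseteq I[T] \subseteq H(T).\]

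\textbf{Step 2: a full pencil forces every adjacent pencil.} We claim: if a $P_3$-convex set $A$ contains $\mathrm{Pen}(V)$ for some $(k-1)$-space $V$, and $V'$ is a $(k-1)$-space with $\dim(V \cap V') = k - 2$, then $A \supseteq \mathrm{Pen}(V')$. Indeed, let $w' \in \mathrm{Pen}(V')$. Since $\dim(V + V') = k$, the space $z := V + V'$ is a $k$-space containing $V$, so $z \in \mathrm{Pen}(V) \subseteq A$, and $\dim(z \cap w') \gs \dim V' = k - 1$. As $V \not\subseteq w'$ (because $\dim(V \cap V') = k-2$ while $\dim(V' ) = k-1$ would force $\dim(w' \cap V) = k-1$ only if $V \subseteq w'$, impossible unless $z = w'$), we may assume $w' \neq z$, and then $\dim(z \cap w') = k-1$, i.e. $w'$ is adjacent to $z \in A$. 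We need a second neighbor of $w'$ in $A$. Pick a $1$-space $\ell \subseteq V$ with $\ell \not\subseteq V'$ (possible since $V \neq V \cap V'$), and set $z_\ell := (V \cap V') + V' $; more usefully, choose any $w'' \in \mathrm{Pen}(V)$ with $w'' \neq z$ and $\ell' := (w'' \cap V') $ still of dimension $k-2$, namely take $w'' = (V \cap V') + V' + \ell''$ for a line $\ell'' \subseteq V \setminus V'$: then $w'' \supseteq V \cap V'$ has dimension $k$, contains $V$, hence $w'' \in \mathrm{Pen}(V) \subseteq A$, and $w'' \cap w' \supseteq V \cap V'$, a $(k-2)$-space, while $w'' \neq w'$; a dimension count (both are $k$-spaces and $w' \cap V \cap V' = V\cap V'$) shows one can arrange $\dim(w'' \cap w') = k-1$. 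Thus $w'$ has two neighbors $z, w'' \in A$, so $w' \in I[A] = A$. Hence $\mathrm{Pen}(V') \subseteq A$.

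\textbf{Step 3: connectivity.} The graph on all $(k-1)$-spaces of $S$ in which $V \sim V'$ iff $\dim(V \cap V') = k-2$ is exactly $J_q(n, k-1)$, which is connected. Starting from $U$ and applying Step~2 repeatedly along a path in $J_q(n,k-1)$, we conclude that $H(T) \supseteq \mathrm{Pen}(V)$ for \emph{every} $(k-1)$-space $V$ of $S$.

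\textbf{Step 4: conclusion.} Every $k$-space $w$ of $S$ contains some $(k-1)$-space $V$ and hence lies in $\mathrm{Pen}(V) \subseteq H(T)$. Therefore $H(T) = \qb{S}{k}$, so $h(J_q(n,k)) \ls 2$, and combined with $h(J_q(n,k)) \gs 2$ we get $h(J_q(n,k)) = 2$.
\end{proof}
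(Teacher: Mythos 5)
Your route is genuinely different from the paper's. The paper starts from two \emph{non-adjacent} vertices meeting in a $(k-2)$-space, first infects four intermediate vertices, then the two families $C_1,C_2$ of $k$-spaces containing a fixed $(k-1)$-space, and finally sweeps out the whole graph through the nested sets $D_0\subseteq\cdots\subseteq D_{k-2}$ of $k$-spaces containing a prescribed coordinate subspace. You instead start from an \emph{adjacent} pair through a common hyperplane $U$, note that the whole pencil $\mathrm{Pen}(U)$ is infected in one step, and then propagate pencil-to-pencil along $J_q(n,k-1)$. This is a clean and more conceptual strategy (and it does not actually use $n\gs 2k$, only $n>k$), but Step 3 leans on the connectivity of $J_q(n,k-1)$, which you assert without proof; it is standard, but you should either cite it or give the one-line argument (exchange one basis vector at a time).

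There is, however, a concrete error in Step 2, which is the heart of your argument. The second neighbor you construct, $w''=(V\cap V')+V'+\ell''$ with $\ell''\subseteq V\setminus V'$, is not a second neighbor at all: $(V\cap V')+V'=V'$ and $(V\cap V')+\ell''=V$, so $w''=V'+\ell''\supseteq V+V'$, and since $\dim w''=k=\dim(V+V')$ this forces $w''=V+V'=z$, the very vertex you already used. The closing remark that ``a dimension count shows one can arrange $\dim(w''\cap w')=k-1$'' is not a proof. The repair is short: since $w'\neq z$, choose a line $\ell\subseteq w'$ with $\ell\not\subseteq z$ and set $w'':=V+\ell$. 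Then $\ell\cap V=\mathbf{0}$, so $\dim w''=k$ and $w''\in\mathrm{Pen}(V)\subseteq A$; moreover $w''\neq z$ because $\ell\not\subseteq z$, and $w''\cap w'\supseteq(V\cap V')\oplus\ell$, a space of dimension $k-1$, while $w''\neq w'$ (as $V\not\subseteq w'$), so $\dim(w''\cap w')=k-1$. With $z$ and this $w''$ you have the two required neighbors of $w'$ in $A$, and the rest of your argument (Steps 1, 3, 4) goes through.
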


Note that $K_q(n, 1)=J_q(n,1)$ is a complete graph. Then $h(K_q(n, 1))=  h(J_q(n, 1)) = 2$. Hence we will assume $k\ge 2$ in the following sections. This article is organized as follows. In Section 2, we will give the exact value of $h(K_q(n, k))$. The exact value of $ h(J_q(n, k))$ will be given in Section 3.

\section{Hull number of $K_q(n, k)$}\label{firstcase}

In the sequel, we will abbreviate ``$ k $-dimensional subspace" to ``$ k $-subspace". Recall $ S $ is an $ n $-dimensional vector space over $ \BF_q $. Given an $ m $-subspace $ u(m)\subseteq S $, define
\[A(S,u(m),k,i) := \left\{w\subseteq S: \dim w = k, \dim (w \cap u(m)) = i\right\},\]where $0\le i\le k$. Let  $a(S,u(m),k,i)=|A(S,u(m),k,i)|$.
 Note that $A(S,u_m,k,0),A(S,u_m,k,1),$ $\ldots, A(S,u_m,k,k)$ form a partition of $V(K_q(n, k))$.

\begin{lem}\label{S_i}
	
{\rm (Lemma 9.3.2 in \cite{GK2015})} Let $ u_m\subseteq S $ be an $ m $-subspace. Then for $0\le i\le k$, we have	
\[a(S,u(m),k,i) = q^{(k-i)(m-i)}\qb{m}{i}\qb{n-m}{k-i}.\]
\end{lem}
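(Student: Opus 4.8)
The plan is to compute $a(S,u(m),k,i)$ by fixing the intersection $w\cap u(m)$ and then counting the $k$-subspaces $w$ with that prescribed intersection, the latter being reduced to a transversality count in a quotient space. Write $u:=u(m)$. Each $w\in A(S,u,k,i)$ determines the $i$-subspace $v:=w\cap u\subseteq u$, and as $v$ runs over the $\qb{m}{i}$ subspaces of dimension $i$ of the $m$-dimensional space $u$, the sets $\{w : \dim w=k,\ w\cap u=v\}$ partition $A(S,u,k,i)$. I will show that each such set has the same cardinality $N$, which I then compute; granting this, $a(S,u,k,i)=\qb{m}{i}\cdot N$.

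Fix an $i$-subspace $v\subseteq u$ and pass to the quotient $S/v$, of dimension $n-i$. By the correspondence theorem, $w\mapsto w/v$ is a bijection from the $k$-subspaces of $S$ containing $v$ onto the $(k-i)$-subspaces of $S/v$; it carries $u$ to the $(m-i)$-subspace $u/v$, and $(w/v)\cap(u/v)=(w\cap u)/v$, so $w\cap u=v$ if and only if $(w/v)\cap(u/v)=0$. Hence $N$ equals the number of $(k-i)$-subspaces of an $(n-i)$-dimensional space meeting a fixed $(m-i)$-subspace trivially, which visibly does not depend on $v$. To count the $t$-subspaces of a $d$-dimensional space $V$ meeting a fixed $\ell$-subspace $U_0$ trivially, with $(t,d,\ell)=(k-i,\,n-i,\,m-i)$, I count the ordered $t$-tuples of vectors of $V$ whose images in $V/U_0$ are linearly independent --- equivalently, the ordered bases of the $t$-subspaces that meet $U_0$ trivially: there are $\prod_{j=0}^{t-1}(q^{d}-q^{\ell+j})$ of them, so dividing by the number $\prod_{j=0}^{t-1}(q^{t}-q^{j})$ of ordered bases of a single $t$-dimensional space, and then factoring $q^{\ell}$ out of each numerator factor, gives $q^{\ell t}\qb{d-\ell}{t}=q^{(k-i)(m-i)}\qb{n-m}{k-i}$. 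Combining with the first step yields $a(S,u,k,i)=q^{(k-i)(m-i)}\qb{m}{i}\qb{n-m}{k-i}$.

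Every step is elementary, and the only places needing a little care are the bijection $w\mapsto w/v$ together with the identity $(w/v)\cap(u/v)=(w\cap u)/v$ --- which is what turns the exact condition $w\cap u=v$ into a clean trivial-intersection condition in $S/v$ --- and the degenerate ranges $i>m$ or $k-i>n-m$, where both sides equal $0$ and which are handled separately. I expect the only slightly tedious part to be the product manipulation in the last step, and that is routine.
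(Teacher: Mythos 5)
Your argument is correct. Note that the paper does not prove this lemma at all --- it is quoted as Lemma 9.3.2 of Godsil--Meagher \cite{GK2015} --- so there is no in-paper proof to compare against; your proof (partition by the intersection $v=w\cap u(m)$, pass to the quotient $S/v$ to turn the condition $w\cap u(m)=v$ into a trivial-intersection condition, and count ordered tuples independent modulo the fixed subspace) is the standard derivation of this formula, and all the steps, including the identity $(w/v)\cap(u(m)/v)=(w\cap u(m))/v$ and the degenerate cases where a Gaussian coefficient vanishes, check out.
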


By Lemma \ref{S_i}, $ K_q(n, k) $ is $ q^{k^2}\qb{n-k}{k} $-regular and the size is $ \frac{1}{2}q^{k^2}\qb{n - k}{k}\qb{n}{k} $.
For $0\le i,j\le k$ with $i\not=j$ and $0\le m\le n-k$, define
\[d_{ij} = q^{ij}\qb{m - i}{j}\times\sum_{r = b}^{B} q^{r({r - 1} + 2i)/2} \qb{m - i - j}{r}\qb{k - i}{r}\qp{r}\times q^{s(s + m - i)}\qb{n - m - k + i}{s},\]
where $ s = k - j - r $, $ b = \max\{0, m +2k - n - i - j\}  $, $ B = \min\{m - i - j, k - i, k - j\} $. 

Note that  $d_{ij}\not=d_{ji}$ and $ d_{ij} = 0 $ if and only if $b>B$. We have the following result first.

\begin{lem}\label{d_ij}
Let $n\ge 2k+1$ and $m=k+1$. Then for $1\le i\le k$, we have $d_{i0}\ge 2$.
\end{lem}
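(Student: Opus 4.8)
The plan is to exploit positivity. Every summand in the definition of $d_{i0}$ is a product of a power of $q$, Gaussian binomial coefficients, and a $q$-limit $\qp{r}=\prod_{\ell=1}^{r}(q^{\ell}-1)$; since $q\gs 2$ every factor $q^{\ell}-1\gs 1$, so $\qp{r}\gs 1$, and hence each summand is nonnegative. Consequently it suffices to exhibit one index $r$ with $b\ls r\ls B$ for which the corresponding term is at least $2$, and then $d_{i0}\gs 2$ follows at once.

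First I would substitute $m=k+1$ and $j=0$ into the formula. Then $B=\min\{m-i,k-i\}=k-i$ and $b=\max\{0,\,3k+1-n-i\}$, the prefactor $\qb{m-i}{0}$ equals $1$, and $n-m-k+i=n-2k-1+i$. I would record that the summation range is nonempty, i.e. $b\ls B$, which for $i\ls k$ reduces to the hypothesis $n\gs 2k+1$. Next I would split on $i$. If $1\ls i\ls k-1$, so that $B=k-i\gs 1$, I would bound below the single term with $r=B=k-i$ (so $s=k-r=i$): in it the factor $\qb{m-i}{r}=\qb{k+1-i}{k-i}=[k+1-i]_q\gs [2]_q=q+1\gs 3$ since $k+1-i\gs 2$, while $\qp{k-i}\gs q-1\gs 1$, the trailing coefficient $\qb{n-2k-1+i}{i}\gs 1$ because $n\gs 2k+1$, and the remaining $q$-power together with $\qb{k-i}{k-i}=1$ are at least $1$; hence this term alone is $\gs 3$. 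If $i=k$, then $B=0$ and, using $n\gs 2k+1$, also $b=0$, so $d_{k0}$ consists of the single term with $r=0$, namely $q^{k(k+1)}\qb{n-k-1}{k}$, which is $\gs q^{k(k+1)}\gs 2$ since $\qb{n-k-1}{k}\gs 1$ (again by $n\gs 2k+1$).

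I do not expect a genuine obstacle here: the argument is bookkeeping, substituting $m=k+1$ and estimating one well-chosen summand from below by a small constant. The only points requiring care are checking that the chosen index $r$ really lies in $[b,B]$, so that the term genuinely appears in the sum, and that the trailing Gaussian binomial $\qb{n-m-k+i}{s}$ is nonzero; both reduce to the inequality $n\gs 2k+1$.
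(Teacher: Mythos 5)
Your proof is correct. It differs from the paper's in organization rather than in substance: the paper argues by contradiction, first noting that the range $[b,B]$ is nonempty (so $d_{i0}\neq 0$), then claiming that $d_{i0}=1$ would force $b=B$, hence $i=k$ or $n=2k+1$, and computing the unique summand in those two cases (the $r=0$ term $q^{k(k+1)}\qb{n-k-1}{k}$, and the $r=k-i$ term $q^{(k-i)(k+i-1)/2}\qb{k-i+1}{1}\qp{k-i}\,q^{i(k+1)}$) to see it exceeds $1$. You run essentially the same computations but as a direct lower bound: keep one well-chosen summand ($r=k-i$ when $1\ls i\ls k-1$, $r=0$ when $i=k$), verify it lies in $[b,B]$ and that its trailing Gaussian coefficient is nonzero (both equivalent to $n\gs 2k+1$), bound it below by $\qb{k+1-i}{1}\gs q+1\gs 3$ resp. $q^{k(k+1)}\gs 2$, and discard the remaining summands by nonnegativity. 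This buys a small but real advantage: the paper's step ``$d_{i0}=1$ implies $b=B$'' tacitly uses that every summand with $b\ls r\ls B$ is a positive integer, i.e. that all Gaussian coefficients in that range are nonzero, a point left unchecked there, whereas you need only nonnegativity of the unused terms plus one explicit term $\gs 2$; moreover your estimate for $i\ls k-1$ covers all $n\gs 2k+1$ uniformly rather than only the boundary case $n=2k+1$.
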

\begin{proof} Since $m=k+1$, we have $B=k-i$  and $ b = \max\{0, 3k + 1 - n - i\} $. If $d_{i0}=0$, then $k-i<\max\{0, 3k + 1 - n - i\} $ which implies $k<\max\{i, 3k + 1 - n \} $, a contradiction.

Suppose $d_{i0}=1$. Then $ b = B $ which implies $ i = k $ or $ n = 2k + 1 $. If $b=B=0$, then $ r = 0 $ and $ s = k $. But we have \[d_{k0} = q^{k(k + 1)}\qb{n - k - 1}{k} > 1,\]a contradiction. If $ n = 2k + 1 $, then $ b = B = k - i $ which implies $ r = k - i $ and $ s = i $. But we have\[d_{i0} = q^{(k - i)(k + i - 1)/2}\qb{k - i + 1}{1}\qp{k - i} \times q^{i(k + 1)} > 1,\]a contradiction.
\end{proof}

\begin{lem}\label{S_ij}
	
Let $ u(m)\subseteq S $ be an $ m $-subspace. For a fixed vertex $ x \in A(S,u(m),k,i) $, we have $|N(x)\cap A(S,u(m),k,j)|= d_{ij} $, where $0\le i,j\le k$ with $i\not=j$.
	
\end{lem}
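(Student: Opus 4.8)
The goal is to show that $ |N(x)\cap A(S,u(m),k,j)| $ equals $ d_{ij} $; equivalently, to count the $ k $-subspaces $ w $ with $ w\cap x=\mathbf{0} $ (the adjacency condition in $ K_q(n,k) $) and $ \dim(w\cap u(m))=j $, where $ x\in A(S,u(m),k,i) $ is fixed. Put $ U:=u(m) $ and $ X_0:=x\cap U $, so $ \dim X_0=i $ and $ \dim(U+x)=m+k-i $. The plan is to stratify the admissible $ w $ by the flag $ w\cap U\subseteq w\cap(U+x)\subseteq w $. If $ w $ is admissible, then $ W_0:=w\cap U $ is a $ j $-subspace of $ U $ with $ W_0\cap X_0\subseteq w\cap x=\mathbf{0} $, and, setting $ r:=\dim\big(w\cap(U+x)\big)-j\ (\ge 0) $ and $ s:=k-j-r $, we have $ \dim\big(w\cap(U+x)\big)=j+r $ and $ \dim\big(w/(w\cap(U+x))\big)=s $. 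Conversely, $ w $ is reconstructed by choosing, in turn: $ W_0 $; then $ w_2:=w\cap(U+x) $ with $ W_0\subseteq w_2\subseteq U+x $, $ \dim w_2=j+r $, $ w_2\cap U=W_0 $ and $ w_2\cap x=\mathbf{0} $; and finally $ w $ with $ w_2\subseteq w $, $ \dim w=k $ and $ w\cap(U+x)=w_2 $ (the last equality automatically giving $ w\cap U=W_0 $ and $ w\cap x=\mathbf{0} $). Thus $ d_{ij} $ is the product of the three choice-counts, summed over $ r $, and the summand will be seen to vanish outside $ b\le r\le B $.

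For the first choice, Lemma~\ref{S_i} (applied with the $ m $-space $ U $ and its $ i $-subspace $ X_0 $) gives $ a(U,X_0,j,0)=q^{ij}\qb{m-i}{j} $, the prefactor of the asserted formula. For the third choice, pass to $ S/w_2 $, of dimension $ n-j-r $: the requirement $ w\cap(U+x)=w_2 $ becomes $ (w/w_2)\cap\big((U+x)/w_2\big)=\mathbf{0} $, so by Lemma~\ref{S_i} the count is the number of $ s $-subspaces of $ S/w_2 $ meeting the subspace $ (U+x)/w_2 $ (of dimension $ m+k-i-j-r $) trivially, namely $ q^{s(m+k-i-j-r)}\qb{n-m-k+i}{s}=q^{s(s+m-i)}\qb{n-m-k+i}{s} $; this depends on $ w_2 $ only through $ r $. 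The heart of the matter is the second choice. Reducing modulo $ W_0 $, it asks for the number of $ r $-subspaces $ Z $ of $ (U+x)/W_0 $ meeting both $ \widetilde U:=U/W_0 $ (of dimension $ m-j $) and $ \widetilde x:=(x+W_0)/W_0 $ (of dimension $ k $) trivially, where $ \widetilde U\cap\widetilde x $ has dimension $ i $ and $ \widetilde U+\widetilde x=(U+x)/W_0 $; in particular this, like the other two counts, depends only on $ i,j,k,m $ and not on the earlier choices.

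To count these $ Z $, fix a direct sum decomposition $ (U+x)/W_0=D\oplus U_1\oplus x_1 $ with $ D:=\widetilde U\cap\widetilde x $, $ \widetilde U=D\oplus U_1 $ and $ \widetilde x=D\oplus x_1 $, so $ \dim U_1=m-i-j $ and $ \dim x_1=k-i $. Since $ Z\cap\widetilde U=\mathbf{0} $ and $ \widetilde U=D\oplus U_1 $, the projection onto $ x_1 $ along $ D\oplus U_1 $ maps $ Z $ isomorphically onto an $ r $-subspace $ R\subseteq x_1 $; hence there are unique linear maps $ \psi\colon R\to U_1 $ and $ \phi\colon R\to D $ with $ Z=\{\,t+\psi(t)+\phi(t):t\in R\,\} $. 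One then checks $ Z\cap\widetilde x=\{\,t+\phi(t):t\in\ker\psi\,\} $, so $ Z\cap\widetilde x=\mathbf{0} $ is equivalent to injectivity of $ \psi $. Hence the admissible $ Z $ correspond bijectively to triples $ (R,\psi,\phi) $ with $ R $ an $ r $-subspace of $ x_1 $, $ \psi\colon R\to U_1 $ injective linear, and $ \phi\colon R\to D $ arbitrary linear; the number of such triples (the three factors counting $ R $, $ \psi $, $ \phi $ respectively) is $ \qb{k-i}{r}\cdot q^{r(r-1)/2}\qb{m-i-j}{r}\qp{r}\cdot q^{ir}=q^{r(r-1+2i)/2}\qb{m-i-j}{r}\qb{k-i}{r}\qp{r} $. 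Multiplying the three counts and summing over $ r $ — the summand vanishes unless $ r\ge 0 $, $ r\le\min\{m-i-j,k-i\} $ (needed for $ R $ and an injection $ \psi $ to exist), $ s=k-j-r\ge 0 $, and $ s\le n-m-k+i=\dim\big(S/(U+x)\big) $, that is unless $ b\le r\le B $ — reproduces exactly the claimed formula for $ d_{ij} $. The one step that does not reduce to a direct application of Lemma~\ref{S_i} — and where I expect the real work to lie — is this parametrisation of the $ r $-subspaces transverse to the two complementary subspaces $ \widetilde U,\widetilde x $; the factor $ q^{ir} $ in excess of $ |\GL_r(\BF_q)|=q^{r(r-1)/2}\qp{r} $ records precisely the freedom of the component map $ \phi $ into the intersection $ D $.
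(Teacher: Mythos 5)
Your proof is correct, and it lands on the same three-factor decomposition as the paper: the prefactor $q^{ij}\qb{m-i}{j}$ counting $w\cap u(m)$ via Lemma~\ref{S_i}, a middle factor for the part of $w$ inside $u(m)+x$, an outer factor $q^{s(s+m-i)}\qb{n-m-k+i}{s}$, and the range $b\le r\le B$ from the same vanishing conditions. Where you differ is in how the counting is organized: the paper writes $y=y_1\oplus y_2\oplus y_3$ with non-canonical complements $y_2,y_3$, counts ordered choices of independent vectors, and then divides by factors such as $q^{r(r-1)/2}\qp{r}q^{jr}$ and $q^{s(j+r)}$ to ``eliminate the repetitions''; you instead stratify by the canonical flag $w\cap u(m)\subseteq w\cap(u(m)+x)\subseteq w$, so each admissible $w$ corresponds to exactly one triple and no overcount correction is needed. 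In particular, your parametrization of the $r$-subspaces $Z$ transverse to $\widetilde U$ and $\widetilde x$ as graphs of pairs $(\psi,\phi)$ with $\psi$ injective is a bijective replacement for the paper's vector count and makes the excess factor $q^{ir}$ (the map into $D=\widetilde U\cap\widetilde x$) conceptually transparent, and your third step via the quotient $S/w_2$ reduces cleanly to Lemma~\ref{S_i}. Both routes yield the identical formula; yours buys a repetition-free bijection at the cost of setting up the quotient/graph bookkeeping, while the paper's is a more hands-on count with explicit spanning vectors.
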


\begin{proof}

Let $ x \in A(S,u(m),k,i)$. If there is $y\in A(S,u(m),k,j)$ such that $y\in N(x)$, then we have $\dim(x\cap y)=0$.
We choose $ k $ independent vectors to form an $ k $-dimensional subspace $ y\in A(S,u(m),k,j)\cap N(x) $ according to the following three strategies such that $ y $ admits a direct sum decomposition, that is $ y = y_1 \oplus y_2 \oplus y_3 $.

Firstly, in order that $y\in A(S,u(m),k,j)$, we choose $ j $ independent vectors to form $ y_1 $ such that $ y_1 \subseteq u(m) $, $ y_1 \cap x = \textbf{0} $ and $ \dim y_1 = j $. By Lemma \ref{S_i}, we have	
\[|\overline{Y_1}| =a(u(m),(x\cap u)(i),j,0)= q^{ij}\qb{m - i}{j},\]
where $\overline{Y_1}=\{y_1: y_1 \subseteq u(m), y_1 \cap x = \textbf{0}, \dim y_1 = j\}$.

Let $y_1\in \overline{Y_1}$. We now choose $ k - j $ independent vectors not belonging to $ y_1 $ to form $ y_2 $ such that $ y_2\subseteq u(m) + x $, $ y_2\cap u(m) =\textbf{0}$, $ y_2 \cap (y_1 + x) = \textbf{0} $ and $ \dim y_2 =r $. Denote $\overline{Y_2}=\{y_2: y_2 \subseteq u(m) + x, y_2 \cap u(m) = y_2 \cap (y_1 + x) = \textbf{0}, \dim y_2 = r\}$. Then we have
$$\begin{array}{rcl}
|\overline{Y_2}|&=&\prod_{l = 0}^{r - 1}\left(q^{m + k - i} - q^{m + l} - q^{k + j + l} + q^{i + j + 2l}\right) \left(q^{r(r - 1)/2}\qp{r} q^{jr}\right)^{-1}\\
&=&q^{r(i + j + r - 1)}\frac{\qp{m - i - j}\qp{k - i}}{\qp{m - i - j - r}\qp{k - i - r}} \left(q^{r(r - 1)/2}\qp{r} q^{jr}\right)^{-1}\\
&=&q^{r({r - 1} + 2i)/2} \qb{m - i - j}{r}\qb{k - i}{r}\qp{r},
\end{array}$$
where $ 0 \leqslant r \leqslant \min\{m - i - j, k - i\} $.
	
	


Given $y_1\in \overline{Y_1}$ and $y_2\in \overline{Y_2}$. Let  $ s = k - j - r $. We choose $ s $ independent vectors to form $ y_3 $ such that $ y_3\cap (u((m) + x) = 0 $ and $ \dim y_3 = s $. By Lemma \ref{S_i} and eliminating the repetitions, we have
\[|\overline{Y_3}| =a(S,(u(m)+x)(m+k-i),s,0)q^{-s(j+r)}
= q^{s(s + m - i)}\qb{n - m - k + i}{s},\]
where $\overline{Y_3}=\{y_3: y_3 \cap (u(m)+x) = \textbf{0}, \dim y_3 = s\}$ and $0\le s\le n-m-k+i$ which implies $ m + 2k - n - i - j \leqslant r \leqslant k - j $.	

In conclusion, we have that
$$d_{ij} = |\overline{Y_1}| \times \sum_{r = b}^{B}\left(|\overline{Y_2}| \times |\overline{Y_3}|\right),$$ where $ b = \max\{0, m +2k - n - i - j\}  $ and $ B = \min\{m - i - j, k - i, k - j\} $. Thus the statement follows.	
\end{proof}

\begin{lem}\label{three}
	
Let $ w_1, w_2, w_3 \in V(K_q(2k, k)) $ with $ w_1w_2\in E(K_q(2k, k))$ and $w_{1}w_3, w_{2}w_3 \notin E(K_q(2k, k))$. Then there is $ w_4 \in V(K_q(2k, k)) $ such that $ w_{1}w_4, w_{2}w_4, w_{3}w_4\in E(K_q(2k, k))$.	
	
\end{lem}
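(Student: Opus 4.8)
We work in $S$ of dimension $2k$. We have $w_1, w_2$ adjacent (so $w_1 \cap w_2 = \mathbf 0$ and hence $S = w_1 \oplus w_2$), while $w_3$ meets both $w_1$ and $w_2$ nontrivially. We want a single $w_4$ that is disjoint from each of $w_1, w_2, w_3$. The natural device is to pick an $m$-subspace $u(m)$ adapted to the configuration and apply Lemma~\ref{S_ij}: a vertex $x$ lying in $A(S, u(m), k, i)$ has exactly $d_{ij}$ neighbors in $A(S, u(m), k, j)$, and Lemma~\ref{d_ij} guarantees $d_{i0} \geq 2$ once $m = k+1$ and $n \geq 2k+1$. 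The slogan is: if I can exhibit an $(k+1)$-subspace $u$ such that ``being disjoint from $w_1, w_2, w_3$'' is implied by ``meeting $u$ in dimension $0$'', then Lemma~\ref{d_ij} finishes the job — except that here $n = 2k$, not $2k+1$, so I will instead embed the problem into $S' \supseteq S$ of dimension $2k+1$, or argue directly.

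**The main steps.** First I would reduce to a normal form: since $S = w_1 \oplus w_2$, write $w_3$'s intersections as $a := \dim(w_3 \cap w_1) \geq 1$ and $b := \dim(w_3 \cap w_2) \geq 1$; note $a + b \leq k$ because $(w_3 \cap w_1) \oplus (w_3 \cap w_2) \subseteq w_3$. Second, I would choose $u(m)$ to be a cleverly chosen $(k+1)$-subspace of $S$ — for instance, take $u = (w_3 \cap w_1) \oplus (w_3 \cap w_2) \oplus \langle v \rangle$ for a suitable vector $v$, engineered so that any $k$-subspace $w_4$ with $w_4 \cap u = \mathbf 0$ automatically satisfies $w_4 \cap w_3 = \mathbf 0$; simultaneously I want $w_4 \cap w_1$ and $w_4 \cap w_2$ forced to be $\mathbf 0$. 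Since $S = w_1 \oplus w_2$ has dimension $2k$, a $k$-subspace $w_4$ with $w_4 \cap w_1 = \mathbf 0$ is automatically a complement of $w_1$, and similarly for $w_2$; so the real content is to arrange $w_4 \cap w_1 = w_4 \cap w_2 = w_4 \cap w_3 = \mathbf 0$ simultaneously. Third, I would count: the number of $k$-subspaces disjoint from a fixed $k$-subspace is $a(S, w_1, k, 0) = q^{k^2}$ by Lemma~\ref{S_i}; subtracting those that fail disjointness from $w_2$ or $w_3$ via inclusion–exclusion, I would show the count of valid $w_4$ is strictly positive (in fact $\geq 1$ suffices here — we only need existence, not $\geq 2$).

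**The expected obstacle.** The delicate point is that $n = 2k$ sits exactly at the boundary, so the clean estimate $d_{i0} \geq 2$ from Lemma~\ref{d_ij} (which needs $n \geq 2k+1$) is not directly available. I expect the cleanest route is the direct inclusion–exclusion count: let $N_0$ be the number of $k$-subspaces $w_4 \subseteq S$ with $w_4 \cap w_1 = \mathbf 0$; among these, let $N_2$ (resp.\ $N_3$) be the number that additionally meet $w_2$ (resp.\ $w_3$) nontrivially. One has $N_0 = q^{k^2}$, and I would bound $N_2$ and $N_3$ from above using Lemma~\ref{S_i}-type counts restricted to the complements-of-$w_1$ picture (a $k$-subspace disjoint from $w_1$ is the graph of a linear map $w_2 \to w_1$, so these counts become counts of matrices with prescribed rank conditions). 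The goal is the inequality $N_0 > N_2 + N_3$, equivalently that a random complement of $w_1$ avoids both $w_2$ and $w_3$ with positive probability; this should hold comfortably since $N_2/N_0$ and $N_3/N_0$ are each bounded by something like $k \cdot q^{-1} \cdot(1 + o(1))$, which is less than $1/2$ for $q \geq 3$, and the small cases $q = 2$ with small $k$ can be checked by hand or folded into a sharper bound. The real care is making the matrix-rank count for $N_3$ honest, because $w_3$ is not in ``general position'' relative to $w_1 \oplus w_2$ — its intersection pattern $(a, b)$ must be tracked — but since we only need one good $w_4$, even a crude bound on $N_3$ (e.g.\ $N_3 \leq (q^a + q^b) q^{k^2 - k} \leq 2 q^{k^2-1}$ after suitable normalization) will suffice.
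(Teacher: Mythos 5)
There is a genuine gap: your argument is a plan whose decisive inequality $N_0 > N_2 + N_3$ is never actually proved, and the bounds you sketch fail exactly where care is needed, namely at $q=2$. Among the $N_0 = q^{k^2}$ complements of $w_1$ (graphs of linear maps $w_2 \to w_1$), the number meeting $w_2$ nontrivially is exactly $N_2 = q^{k^2}\bigl(1-\prod_{i=1}^{k}(1-q^{-i})\bigr)$, which for $q=2$ is already about $0.71\,q^{k^2}$; and your proposed bound $N_3 \le 2q^{k^2-1}$ equals $q^{k^2}$ when $q=2$, so the union bound cannot close. (Even the sharper estimate $N_3 \le \frac{q^k-q^a}{q-1}\,q^{k^2-k}$ only gives roughly $\frac{1}{q-1}q^{k^2}$, which together with $N_2$ still exceeds $N_0$ for $q=2$.) Dismissing this as ``small cases $q=2$ checked by hand'' is not legitimate: $q=2$ is an infinite family (all $k\ge 2$), not finitely many configurations. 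To make a counting proof honest you would have to work inside the set of \emph{common} complements of $w_1$ and $w_2$ — the $|\GL_k(\BF_q)| = \prod_{i=0}^{k-1}(q^k-q^i)$ graphs of invertible maps $w_2\to w_1$ — and show, by a count stratified by the intersection pattern $(\alpha,\beta)$ of $w_3$ with $w_1,w_2$, that those whose graph meets $w_3$ do not exhaust them; you never carry out that computation, and your matrix-rank reformulation is only named, not executed.

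For comparison, the paper sidesteps counting entirely with an explicit construction: writing $w_1=\langle e_1,\ldots,e_k\rangle$, $w_2=\langle f_1,\ldots,f_k\rangle$ and putting $w_3$ in the normal form $\langle e_1,\ldots,e_\alpha\rangle\oplus\langle f_1,\ldots,f_\beta\rangle\oplus\langle e_{\alpha+1}+f_{\beta+1},\ldots,e_{\alpha+\gamma}+f_{\beta+\gamma}\rangle$, it takes $w_4$ to be spanned by $k$ vectors of the form $e_i+f_{j}$ with the indices paired by a suitable cyclic shift, and then verifies directly that $w_4$ is disjoint from $w_1$, $w_2$ and $w_3$ (the last because all $e_i,f_i$ lie in $w_3+w_4$). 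If you want to salvage your route, the viable fix is the exact enumeration of common complements of three $k$-subspaces (or at least a bound valid for $q=2$), not the crude union bound you propose.
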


\begin{proof} Since $ w_1w_2\in E(K_q(2k, k))$, we have $\dim w_{12}=0$	which implies $ S = w_1 \oplus w_2 $. Denote $ \dim w_{13} = \alpha$ and $\dim w_{23} = \beta $. Then $1\le \alpha,\beta\le k-1$ and $\alpha+\beta\le k$. Let $ \gamma = k - \alpha - \beta $. Then $0\le \gamma\le k-2$. Assume, without loss of generality, that $ w_1 = \langle e_1, \ldots, e_k \rangle$, $ w_2 = \langle f_1, \ldots, f_k \rangle$  and $ w_3 = \langle e_1, \ldots,  e_\alpha \rangle
\oplus \langle f_1, \ldots, f_\beta \rangle \oplus \langle e_{\alpha + 1} + f_{\beta + 1}, \ldots, e_{\alpha + \gamma} + f_{\beta + \gamma} \rangle $ if $\gamma\ge 1$; else  $ w_3 = \langle e_1, \ldots,  e_\alpha \rangle
\oplus \langle f_1, \ldots, f_\beta \rangle $.
 If $\gamma\ge 2$, we choose
\[ w_4 = \langle e_{\alpha + 1} + f_{\beta + 2}, \cdots, e_{\alpha + \gamma - 1} + f_{\beta + \gamma} \rangle \oplus \langle e_{\alpha + \gamma} + f_1, \cdots, e_k + f_{\beta + 1} \rangle \oplus \langle e_1 + f_{\beta + \gamma + 1}, \cdots, e_\alpha + f_k \rangle; \]if $ \gamma = 1 $, we choose
\[ w_4 =  \langle{e_{\alpha + 1} + f_1, \cdots, e_k + f_{\beta + 1}}\rangle \oplus \langle{e_1 + f_{\beta + 2}, \cdots, e_{\alpha} + f_k}\rangle; \]if $ \gamma = 0 $, we choose
\[w_4 = \langle{e_{\alpha + 1} + f_1, \cdots, e_k + f_\beta}\rangle \oplus \langle{e_1 + f_{\beta + 1}, \cdots, e_{\alpha} + f_k}\rangle.\] In each cases, we have $ \dim ((w_4 + w_1) / w_1) = \dim ((w_4 + w_2) / w_2) = k $ which implies $ w_{14} = w_{24} = \textbf{0} $. Since for any $  1 \le i \le k$, $e_i, f_i \in w_3 + w_4 $, we have  $ w_{34} = \textbf{0} $. Thus $ w_{1}w_4, w_{2}w_4, w_{3}w_4\in E(K_q(2k, k))$.	\end{proof}


\begin{lem}\label{coclique}
	
Let $ w_1, w_2 \in V(K_q(2k, k)) $ with $ w_1 w_2 \notin E(K_q(2k, k)) $. Then we have $ N(w_1) \cup N(w_2) \subseteq H(\{w_1, w_2\}) $.
\end{lem}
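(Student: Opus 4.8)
The plan is to prove that $N(w_1)\subseteq H(\{w_1,w_2\})$; since both the hypothesis and the conclusion are symmetric in $w_1$ and $w_2$, this simultaneously yields $N(w_2)\subseteq H(\{w_1,w_2\})$ and hence the lemma. First I would record two easy facts. Because $w_1w_2\notin E(K_q(2k,k))$ and $w_1\neq w_2$, we have $1\ls \dim w_{12}\ls k-1$; in particular $w_2\notin N(w_1)$, and any $x\in N(w_1)$ satisfies $x\notin\{w_1,w_2\}$. Also, by Lemma \ref{S_i} with $n=2k$ the graph $K_q(2k,k)$ is $q^{k^2}$-regular, so $N(w_1)$ is nonempty and there is something to prove.

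Next, fix $x\in N(w_1)$ and split into two cases according to whether $x$ is adjacent to $w_2$. If $xw_2\in E(K_q(2k,k))$, then $x$ has the two neighbours $w_1,w_2$, both of which lie in $\{w_1,w_2\}$, so $x\in I[\{w_1,w_2\}]\subseteq H(\{w_1,w_2\})$. If $xw_2\notin E(K_q(2k,k))$, I would apply Lemma \ref{three} to the ordered triple $(w_1,x,w_2)$: it satisfies $w_1x\in E(K_q(2k,k))$, $w_1w_2\notin E(K_q(2k,k))$ and $xw_2\notin E(K_q(2k,k))$, which are exactly the hypotheses of that lemma with $x$ and $w_2$ in the roles of its ``$w_2$'' and ``$w_3$''. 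This produces a vertex $w_4$ adjacent to all of $w_1$, $x$ and $w_2$.

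A two-round infection argument then finishes the proof: since $w_4$ is adjacent to the two infected vertices $w_1$ and $w_2$, we get $w_4\in I[\{w_1,w_2\}]$; and since $x$ is adjacent to $w_1\in\{w_1,w_2\}\subseteq I[\{w_1,w_2\}]$ and to $w_4\in I[\{w_1,w_2\}]$, we get $x\in I^2[\{w_1,w_2\}]\subseteq H(\{w_1,w_2\})$. Hence $N(w_1)\subseteq H(\{w_1,w_2\})$, and by symmetry $N(w_2)\subseteq H(\{w_1,w_2\})$ as well.

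I do not expect a serious obstacle: the geometric content is entirely carried by Lemma \ref{three}, and what remains is bookkeeping. The one point to be careful about is the relabelling when invoking Lemma \ref{three}, since that lemma is stated with a fixed pattern of one edge and two non-edges among the triple; one must check that the unique edge $w_1x$ of $(w_1,x,w_2)$ occupies the ``$w_1w_2$'' slot of the lemma, which it does. No computation beyond Lemmas \ref{S_i} and \ref{three} is needed.
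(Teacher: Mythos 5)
Your proof is correct and follows essentially the same route as the paper: the trivial case $x\in N(w_1)\cap N(w_2)$, then Lemma \ref{three} applied to the triple with edge $w_1x$ and non-edges $w_1w_2$, $xw_2$ to produce a common neighbour, followed by the two-round infection argument. Your write-up just makes explicit the relabelling and the symmetry step that the paper leaves implicit.
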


\begin{proof}	
Obviously, we have $ N(w_1) \cap N(w_2) \subseteq H(\{w_1, w_2\}) $. Assume $N(w_1) \setminus N(w_2)\not=\emptyset $ and let $ x \in N(w_1) \setminus N(w_2) $. Then $ x \cap w_1 = \textbf{0} $ but $ x \cap w_2 \ne \textbf{0} $. By Lemma \ref{three}, there exists $y\in  V(K_q(2k, k)) $ such that $ y \in N(w_1) \cap N(w_2) $  and $ xy\in E(K_q(2k, k)) $. So $ x \in H(\{w_1, w_2\}) $. Thus we have $ N(w_1) \cup N(w_2) \subseteq H(\{w_1, w_2\}) $.
\end{proof}

Now we are going to prove Theorem \ref{thm_main}.

\vskip.2cm

{\bf Proof of Theorem \ref{thm_main}} We just need to show that $ h(K_q(n, k)) \le 2 $. We consider the following two cases.

\vskip.2cm
{\bf Case 1} $ n \geqslant 2k + 1 $.

Denote by $ e_1,e_2,\ldots,e_n $ the coordinate vectors. Let $ m = k + 1$ and $ u(k+1) = \langle e_1, \ldots, e_{k + 1} \rangle $. By Lemma \ref{S_i}, we have $ a(S,u(k+1),k,0) \ne 0 $. Let $ w_1 = \langle e_1, \ldots, e_k \rangle $ and $ w_2 = \langle e_2, \ldots, e_{k + 1} \rangle $. Define $ T = \{w_1, w_2\} $. We will show that
$T$ is a $P_3$-hull set of $K_q(n, k)$.

Note that $ w_1,  w_2\in N(w) $ for all   $ w\in A(S,u(k+1),k,0) $. Hence $ A(S,u(k+1),k,0) \subseteq H(T) $. For any $x\in A(S,u(k+1),k,i)$ with $1\le i\le k$, $|N(x)\cap A(S,u(k+1),k,0)|= d_{i0}$ by Lemma \ref{S_ij}. Since $ d_{i0} \geqslant 2 $ by Lemma \ref{d_ij}, we have $ A(S,u(k+1),k,i) \subseteq I[A(S,u(k+1),k,0)] \subseteq H(T) $ for all $ 1\le i\le k $. Thus $H(T)=V(K_q(n, k))$ which implies $ h(K_q(n, k)) = 2 $.

\vskip.2cm
{\bf Case 2} $n=2k$.

		
	

	

 Choose $ w_1, w_2 \in V(K_q(2k, k)) $ with $w_1w_2\notin E(K_q(2k,k))$. Define $ T = \{w_1, w_2\} $. We will show that
$T$ is a $P_3$-hull set of $K_q(2k, k)$. By Lemma \ref{coclique}, $ N(w_1) \cup N(w_2) \subseteq H(T) $. Let $w_3\in V(K_q(2k, k))\setminus (N(w_1) \cup N(w_2)) $, say $w_1w_3\notin E(K_q(2k,k))$. Assume $ \dim w_{13} = k - a $. Then $ 1 \leqslant a \leqslant k - 1  $ and we have
$$\begin{array}{rcl}
&&| \left(N(w_1) \cap N(w_3)\right)|\\
&=&\left(\prod_{l = 0}^{a - 1}\left(q^{k + a} - 2q^{k + l} + q^{k - a + 2l}\right)\right) \times \frac{1}{q^{a(a - 1)/2}\qp{a}q^{a(k - a)}} \times a(S, (w_1 + w_2)(k+a), k - a, 0) \\
 &=& q^{(k - 1) a}\times q^{-a(a - 1)/2}\qp{a}  \times q^{(k - a)(k + a)} \times q^{-a(k - a)} \\
 &=& q^{k^2 - \binom{a + 1}{2}} \times \qp{a} \\
 &\ge & 2,
\end{array}$$which implies $w_3\in H(T)$. Thus we have $ h(K_q(2k, k)) = 2 $.\q

\section{Hull number of $J_q(n, k)$}

In this Section, we determine the exact value of   $ h(J_q(n, k))$ for $n\ge 2k$ and  $ k \gs 2 $. 

\vskip.2cm
{\bf Proof of Theorem \ref{thm_main2}} Let $ \{e_1, e_2, \cdots, e_n\} $ be the coordinate vectors of $S$. For short, we denote $ \spann{0} := \mathbf{0} $ the zero space and $ \spann{l} := \spann{e_1, e_2, \cdots, e_l} $ for some positive integer $ l $. Let $v_1,v_2\in V(J_q(n, k))$, where
\[v_1 = \spann{k - 2} \oplus \spann{e_{k - 1}, e_{k}} \qquad \mbox{and} \qquad v_2 = \spann{k - 2} \oplus \spann{e_{k + 1}, e_{k + 2}}.\]
 Then $v_1v_2\notin E(J_q(n,k))$. Let $T=\{v_1,v_2\}$. We will show that
$T$ is a $P_3$-hull set of $K_q(2k, k)$. Let 
$u_1 = \spann{k - 2} \oplus \spann{e_{k - 1}, e_{k + 1}}$, $u_2 = \spann{k - 2} \oplus \spann{e_{k - 1}, e_{k + 2}}$,
$u_3 = \spann{k - 2} \oplus \spann{e_k, e_{k + 1}}$ and $u_4 = \spann{k - 2} \oplus \spann{e_k, e_{k + 2}}$. Then $v_1,v_2\in \cap_{i=1}^4N(u_i)$ which implies 
 $ \{u_1, u_2, u_3, u_4\} \subseteq I[T] $. Denote	
\begin{align*}
C_1 &= \{w \in V(J_q(n, k)): (\spann{k - 2} \oplus \spann{e_{k - 1}}) \subseteq w\}, \\
C_2 &= \{w \in V(J_q(n, k)): (\spann{k - 2} \oplus \spann{e_{k}}) \subseteq w\}.
\end{align*}
 For any $ w \in C_1 \setminus \{u_1, u_2\} $, we have $ \{u_1, u_2\} \subseteq N(w) $. Hence we have $ C_1 \subseteq I[T] $. Similarly, we have $ C_2 \subseteq I[T] $.
For $ 0 \ls i \ls k - 2 $, denote	
\[D_i = \{w \in V(J_q(n, k)): \spann{k - 2 - i} \subseteq w\} \subseteq V(J_q(n, k)).\]Then $D_{k - 2} = V(J_q(n, k))$ and
\[D_0 \subseteq D_1 \subseteq \cdots \subseteq D_{k - 2}.\]
For any $ w = (\spann{k - 2} \oplus \spann{x_1, x_2}) \in D_0 \setminus (C_1 \cup C_2) $, there exists $ w_1 = (\spann{k - 2} \oplus \spann{e_{k - 1}, x_1}) \in C_1 $ and $ w_2 = (\spann{k - 2} \oplus \spann{e_{k}, x_1}) \in C_2 $ such that $ \{w_1, w_2\} \subseteq N(w) $. Hence $ D_0 \subseteq H(T) $.

Now we prove that for any $ 1 \ls i \ls k - 2 $,  $ D_i \subseteq I[D_{i - 1}] $. We just need to show that for any $ w \in D_i $, $ |N(w) \cap D_{i - 1}| \gs 2 $. Let $ w = \spann{k - 2 - i} \oplus \spann{x_1, x_2, \cdots, x_i, x_{i + 1}, x_{i + 2}} \in D_i$. Set
$w' = \spann{k - 2 - (i-1)} \oplus \spann{x_1, \cdots, x_i, x_{i + 1}}$ and $w'' = \spann{k - 2 - (i-1)} \oplus \spann{x_1, \cdots, x_i, x_{i + 2}}$. Then $w',w''\in D_{i - 1}$ and
 $ \{w', w''\} \subseteq N(w) $. Thus $T$ is a $P_3$-hull set of $K_q(2k, k)$ and the statement follows.\q

\subsection*{Acknowledgements}

This research was supported by  the National Natural Science Foundation of China (Grant 12171272 \& 12161141003 \& 11971158).

\addcontentsline{toc}{chapter}{Bibliography}

\end{document}